\numberwithin{equation}{section}
\theoremstyle{plain}
\newtheorem{theorem}{Theorem}[section]
\newtheorem{lemma}[theorem]{Lemma}
\theoremstyle{definition}
\theoremstyle{remark}
\newtheorem*{remarks}{Remarks}
\newtheorem*{remark}{Remark}
\newcommand{\SL}{\text {\rm SL}}
\newcommand{\R}{\mathbb{R}}
\newcommand{\Q}{\mathbb{Q}}
\newcommand{\Z}{\mathbb{Z}}
\newcommand{\leg}[2]{\left( \frac{#1}{#2} \right)}
\newcommand{\im}{\mbox{Im}}
\newcommand{\calL}{\mathcal{L}}
\newcommand{\PSL}{\operatorname{PSL}}
\begin{document}

\title[The partition function modulo 4]{The partition function modulo 4}

\author{Ken Ono}
\address{Department of Mathematics, University of Virginia, Charlottesville, VA. 22904}
\email{ken.ono691@virginia.edu}

\subjclass[2020]{(Primary) 11P83; (Secondary) 05A17}

\keywords{Borcherds products, Heegner divisors, harmonic Maass forms, partition function}

\thanks{K.O. thanks  the Thomas Jefferson Fund and the NSF
(DMS-2002265 and DMS-2055118) for their support.}

\begin{abstract}  It is widely believed that the parity of the partition function $p(n)$ is ``random.'' Contrary to this expectation, in this note we prove the existence of infinitely many congruence relations modulo 4 among its values.
For each square-free integer $1<D\equiv 23\pmod{24},$ we construct a weight 2 meromorphic modular form that is congruent modulo 4 to a certain twisted generating function for the numbers
$p\big(\frac{Dm^2+1}{24}\big)\pmod 4$.  We prove the existence of infinitely many linear dependence congruences modulo 4 among suitable sets of holomorphic normalizations of these series.
 These results rely on the theory of class numbers and Hilbert class polynomials, and 
{\it generalized twisted Borcherds products} developed by Bruinier and the author.
\end{abstract}

\maketitle

\section{Introduction and statement of results}

A {\it partition} of a non-negative integer $n$ is a non-increasing sequence of positive
integers that sums to $n.$ The partition function $p(n)$ counts the number of partitions of $n.$
Ramanujan's celebrated congruences  \cite{Ramanujan, RamanujanManuscript} assert, for every non-negative integer $n,$ that
\begin{displaymath}
\begin{split}
p(5n+4)&\equiv 0\pmod 5,\\
p(7n+5)&\equiv 0\pmod 7,\\
p(11n+6)&\equiv 0\pmod{11}.
\end{split}
\end{displaymath}
These congruences have inspired
many works (for example, see \cite{Ahlgren2, AhlgrenOno, Andrews, Atkin1, Atkin2, AtkinObrien,
BringmannOno, 
 HirschhornHunt,  LovejoyOno, Newman1, Newman2, Ono2, Watson} to name a few).
Atkin \cite{Atkin1} and Watson \cite{Watson} notably proved infinite families of congruences, where the moduli are arbitrary powers of 5, 7 and 11.

In the 1960s, Atkin \cite{Atkin2} discovered further congruences, such as
\begin{displaymath}
p(17303n+237)\equiv 0\pmod{13},
\end{displaymath}
where the moduli involve primes other than 5, 7, and 11.
In 2000, the author revisited these examples, and he employed the 
Deligne-Serre theory of modular $\ell$-adic
Galois representations and Shimura's theory of half-integral weight modular forms \cite{Ono2} to prove the existence of infinitely many such congruences for every prime modulus $\ell \geq 5$.
Ahlgren  and the  author \cite{Ahlgren2, AhlgrenOno}  later extended this result to include all moduli coprime to 6.

 In contrast to this rich theory of congruences, much less is known about $p(n)$ modulo 2 and 3.
Apart from a theorem of Radu \cite{Radu}, which proves that there are no congruences of the form
$$
p(an+b)\equiv 0\pmod \ell,
$$
where $\ell \in \{2, 3\},$
very little is known. The parity of $p(n)$ is conjectured to be ``random''.
Indeed, a well-known conjecture of
Parkin and Shanks \cite{ParkinShanks} asserts that
\begin{displaymath}
\lim_{X\rightarrow +\infty} \frac{\# \{ n\leq X \ : \ p(n)\ {\text {\rm is even (resp. odd)}}\}}{X}= \frac{1}{2}.
\end{displaymath}
Although there have been many works (for example, see \cite{Ahlgren1, BoylanOno, GarvanStanton, Hirschhorn1,
Hirschhorn2, HirschhornSubbarao, Kolberg, Mirsky, Nicolas1, Nicolas2, Nicolas3, Nicolas4, Ono1, Ono0,  ParkinShanks, Radu, Subbarao}, to name a
few) on the parity of $p(n)$, this conjecture seems far out of reach.
Indeed, the strongest results do not preclude the possibility that
\begin{displaymath}
\lim_{X\rightarrow +\infty}\frac{\# \{ n\leq X \ : \ p(n) \ {\text {\rm is even (resp. odd)}}\}}{X^{\frac{1}{2}+\varepsilon}}=0.
\end{displaymath}
Even less is known about $p(n)$ modulo 3. Indeed, it isn't known that $p(n)$ is infinitely often a multiple of 3, or takes values in any fixed residue class modulo 3 infinitely often.

In view of these difficult problems, in 2010 the author initiated  \cite{Ono4} research in a different direction. Instead of concentrating on the values of the partition function in arithmetic progressions, he identified special quadratic polynomial arguments.
For each square-free integer $1<D\equiv 23\pmod{24},$ he constructed modular forms that capture the parity of the values $p\big(\frac{D m^2+1}{24}\big).$  Using their modularity, he proved general theorems on the infinitude of the number of even (resp. odd) numbers among such values. We refine this work to the modulus 4.

Throughout, we suppose that $1<D\equiv 23\pmod{24}$ is  square-free.
We define
 the formal twisted generating function\footnote{We note that $p(\alpha)\coloneqq0$ for non-integral $\alpha.$}
\begin{equation}\label{PD}
P(D;q)\coloneqq \sum_{m,n\geq 1} \chi_{-D}(n)\chi_{12}(m)p\left(\frac{Dm^2+1}{24}\right)q^{mn},
\end{equation}
where $\chi_{d}(\cdot)$ is the discriminant $d$ Kronecker character.

\begin{remark}
 Apart from $p(0)=1$, each partition number occurs in a single
$P(D;q),$ as $24n-1=Dm^2$ determines the square-free $1<D\equiv 23\mod 24$. Furthermore, modulo 2 we note that
$$
P(D;q)\coloneqq \sum_{\substack{m,n\geq 1\\ \gcd(n,D)=1}} p\left(\frac{Dm^2+1}{24}\right)q^{mn} \pmod 2.
$$
\end{remark}
 
 \medskip

Our first result establishes that
each $P(D;q)$ is  congruent modulo 4 to a specific weight 2 meromorphic modular form that arises as the logarithmic derivative of a twisted Borcherds product. To make this precise, we require the notion of a Heegner point. Let $\mathcal{Q}_{N,-D}$ be the set of positive definite integral binary quadratic forms $Q(x,y) = ax^2+bxy+cy^2,$ where $N\mid a,$ with discriminant $b^2-4ac = -D.$ 
The congruence subgroup $\Gamma_0(N)$ acts on $\mathcal{Q}_{N,-D}$ with finitely many orbits.
If $Q\in \mathcal{Q}_{N, -D},$ then its {\it Heegner point} is  
\begin{equation}
\tau_Q \coloneqq \frac{-b+\sqrt{-D}}{2a} \in \mathbb{H}\coloneqq\{ \im(\tau)>0\}.
\end{equation}
Since $\Gamma_0(N)$ acts on $\mathcal{Q}_{N, -D}$ with finitely many orbits, there are finitely many such points in any fundamental domain for the action of  $\Gamma_0(N)$ on $\mathbb{H}.$ 

Every $P(D;q)$  is congruent modulo $4$ to the Fourier expansion at $i\infty$ (i.e.  $q\coloneqq e^{2\pi i \tau}$)  of a weight 2 meromorphic modular form with simple poles supported at $\Gamma_0(6)$ discriminant $-D$ Heegner points.

\begin{theorem}\label{Theorem1} If $1<D\equiv 23\pmod{24}$ is square-free, then there is a weight 2 meromorphic modular form $\calL_D(\tau)$ on $\Gamma_0(6)$ for which
$$
P(D;q) \equiv \calL_D(\tau)\pmod 4.
$$
The  poles of $\calL_D(\tau)$  are simple and are  at  $\Gamma_0(6)$ discriminant $-D$ Heegner points.
\end{theorem}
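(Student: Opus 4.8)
The plan is to construct $\calL_D$ as the logarithmic derivative of a generalized twisted Borcherds product and then to match Fourier expansions modulo $4$. Writing $q=e^{2\pi i\tau}$, I would start from the observation that
$$
\frac{1}{\eta(24\tau)}=\sum_{n\geq 0}p(n)\,q^{24n-1}
$$
is a weight $-\tfrac12$ weakly holomorphic modular form whose coefficient at $q^{Dm^2}$ is exactly $p(\tfrac{Dm^2+1}{24})$, since $24n-1=Dm^2$ forces $n=\tfrac{Dm^2+1}{24}$. The support already matches $\chi_{12}$: indeed $\chi_{12}(m)\neq 0$ precisely when $\gcd(m,6)=1$, equivalently $m^2\equiv 1\pmod{24}$, which is exactly the condition making $\tfrac{Dm^2+1}{24}$ an integer. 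This eta quotient is the input for the construction of Bruinier and the author.

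Next I would form the generalized twisted Borcherds product $\Psi_D(\tau)$ attached to this input and to the fundamental discriminant $-D$. By the defining property of the construction, $\Psi_D$ is a meromorphic modular form on $\GoN$ with $N=6$ whose divisor is the twisted Heegner divisor of discriminant $-D$: it is supported on the Heegner points $\tau_Q$, $Q\in\calQ_{6,-D}$, each occurring with multiplicity prescribed by the associated genus character. In particular all zeros and poles of $\Psi_D$ are simple and lie at discriminant $-D$ Heegner points. Here the theory of class numbers and Hilbert class polynomials enters: it identifies the finitely many Heegner points in a fundamental domain and controls the algebraic, and after normalization integral, nature of the data, which is what makes reduction modulo $4$ legitimate.

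I would then set $\calL_D(\tau)\coloneqq \tfrac{1}{2\pi i}\,\Psi_D'(\tau)/\Psi_D(\tau)$. Because the input has vanishing constant term, $\Psi_D$ has weight $0$, so its logarithmic derivative is an honest weight $2$ form. The logarithmic derivative of a weight $0$ meromorphic modular form is a weight $2$ meromorphic modular form whose poles are all simple and occur precisely at the zeros and poles of the form, with residues equal to the orders of vanishing. Applied to $\Psi_D$, this at once shows that $\calL_D$ is a weight $2$ meromorphic modular form on $\GoN$, $N=6$, with simple poles exactly at the discriminant $-D$ Heegner points, establishing every assertion of the theorem apart from the congruence.

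The remaining, and genuinely delicate, step is the congruence $P(D;q)\equiv\calL_D(\tau)\pmod 4$. Working out the product formula for $\Psi_D$ and differentiating term by term, one finds that the Fourier expansion of $\calL_D$ at $i\infty$ is a twisted generating series $\sum_{m,n\geq 1}\chi_{-D}(n)\,b(m)\,q^{mn}$, in which the outer character $\chi_{-D}(n)$ arises from the Gauss sum of the twist, and the coefficients $b(m)$ are the normalized Borcherds exponents, built from the input coefficient at $q^{Dm^2}$, namely $p(\tfrac{Dm^2+1}{24})$. It then suffices to establish the term-by-term congruence $b(m)\equiv\chi_{12}(m)\,p(\tfrac{Dm^2+1}{24})\pmod 4$, and I expect this to be the crux. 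The difficulty is that $b(m)$ is not the bare partition value: it carries the differentiation weight $m$, the genus-character sign, and the Gauss-sum normalization, and extracting precisely the sign $\chi_{12}(m)$ modulo $4$ is exactly the new information beyond the mod $2$ identity already obtained by the author in \cite{Ono4}. The natural route is to take that mod $2$ statement as the leading term and to pin down the mod $4$ correction using the $2$-adic behaviour of $p(n)$, the reality of $\chi_{-D}$ and $\chi_{12}$, and the explicit Bruinier--Ono expansion of the twisted Borcherds product, in which the genus character is what supplies the sign $\chi_{12}(m)$ and absorbs the spurious factor of $m$.
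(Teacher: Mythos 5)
Your overall architecture --- define $\calL_D$ as the logarithmic derivative of a generalized twisted Borcherds product and match Fourier expansions modulo $4$ --- is indeed the paper's architecture, but your proposal has a genuine gap at its two load-bearing points. First, the input to the Bruinier--Ono construction cannot be $1/\eta(24\tau)$: that form has weight $-\tfrac12$, while the generalized twisted Borcherds products of \cite{BruinierOno} (Theorems 6.1 and 6.2) take as input vector-valued \emph{harmonic Maass forms of weight} $\tfrac12$. The paper instead uses the weight $\tfrac12$ vector $R(\tau)$ built from Ramanujan's third-order mock theta functions $f(q)$ and $\omega(q)$ (this is \S 8.2 of \cite{BruinierOno}), so the Borcherds exponents are the coefficients $C_R(\overline{m};Dm^2)$ of $\chi_{-12}(j)q^{-1}f(q^{24})$ and of the $\omega$-components; they are \emph{not} partition values, and no direct lift of the partition generating function exists in this framework. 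Your claim that the exponents are ``built from the input coefficient at $q^{Dm^2}$, namely $p\bigl(\tfrac{Dm^2+1}{24}\bigr)$'' is therefore not available to you.

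Second, the congruence you correctly identify as the crux is exactly what you do not prove, and your plan (``take the mod $2$ statement as the leading term and pin down the mod $4$ correction using the $2$-adic behaviour of $p(n)$'') does not contain the idea that makes it work. The paper's proof is elementary and has three ingredients: (i) the Durfee-square identity $P(q)=1+\sum_{m\ge 1} q^{m^2}/\bigl((1-q)^2\cdots(1-q^m)^2\bigr)$, which gives $P(q)\equiv f(q)\pmod 4$ because $(1-q^j)^2\equiv(1+q^j)^2\pmod 4$ --- so the mod $4$ refinement of the mod $2$ fact used in \cite{Ono4} costs nothing extra and requires no $2$-adic analysis of $p(n)$; (ii) the observation that the $\omega$-components of $R$ are $2\left(\omega(\pm q^{12})\pm\omega(\mp q^{12})\right)\equiv 0\pmod 4$, so only the $f$-components survive reduction, making $C_R(\overline{m};Dm^2)\equiv \chi_{-12}(m)\,p\bigl(\tfrac{Dm^2+1}{24}\bigr)\pmod 4$; and (iii) the elementary congruence $\chi_{-12}(m)\,m\equiv\chi_{12}(m)\pmod 4$, which absorbs the factor $m$ produced by differentiation. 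Note also that the Gauss-sum factor must be removed by normalizing $\calL_D=\tfrac{1}{-\sqrt{-D}}\,\mathbb{D}\Psi_D/\Psi_D$; your definition omits this factor, so your $\calL_D$ would have irrational coefficients and the stated congruence could not even be formulated. Without (i)--(iii) the theorem is not proved; with them, the delicate correction step you anticipate simply disappears.
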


Theorem~\ref{Theorem1} implies the existence of infinitely many linear dependence congruences modulo 4 among $q$-series assembled from the
$P(D;q).$ Although these congruence relations generally require many $q$-series, they are ubiquitous. Therefore, the partition function is not truly random modulo  4, as its values satisfy a complicated infinite web of congruences. 

To make this precise, let $S$ be a finite set of square-free integers $1<D\equiv 23\pmod{24},$ and let $h_S:= \max \left \{ h(-D) \ : \ D\in S\right\},$ the maximum of the class numbers $h(-D).$
For each $D\in S,$ we define the {\it holomorphic normalizations}
\begin{equation}
\widehat{P}_S(D;q)\coloneqq P(D;q)\cdot \Delta(\tau)^{h_S}\cdot H_{-D}(1/\Delta(\tau)).
\end{equation}
For fundamental discriminants $-D<0,$ we recall the {\it Hilbert class polynomial}
\begin{equation}
H_{-D}(X)\coloneqq\prod_{Q\in \mathcal{Q}_{-D}/\PSL_2(\Z)}(X-j(\tau_Q)) \in \Z[X],
\end{equation}
where $\mathcal{Q}_{-D}$ denotes the discriminant $-D$ positive definite integral binary quadratic forms, and
$$
j(\tau)\coloneqq\frac{\left(1+240\sum_{n=1}^{\infty}\sum_{d\mid n}d^3 q^n\right)^3}{q\prod_{n=1}^{\infty} (1-q^n)^{24}}=\frac{E_4(\tau)^3}{\Delta(\tau)}=q^{-1}+744+196884q+\dots
$$
is a hauptmodul for the field of modular functions on $\SL_2(\Z).$ The class number $h(-D)$ is the degree of $H_{-D}(X).$

\begin{theorem}\label{Theorem2}
Assuming the notation and hypotheses above, if
$\# S > 12h_S+2,$
then the $q$-series $\left\{\widehat{P}_S(D;q) \ : \ D\in S\right\}$
 are linearly dependent modulo $4.$
\end{theorem}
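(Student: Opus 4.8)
The plan is to use Theorem~\ref{Theorem1} to replace each series $\widehat{P}_S(D;q)$ by an honest integral holomorphic modular form of a \emph{single} fixed weight on $\Gamma_0(6)$, and then to force a relation by comparing $\#S$ against the dimension of the ambient space, while keeping track of the fact that the available scalars form the ring $\Z/4\Z$ rather than a field.

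Fix $D\in S$ and set $h:=h(-D)\le h_S$. The holomorphic normalizing factor is the weight $12h_S$ modular form
$$
F_D(\tau):=\Delta(\tau)^{h_S-h}\prod_{Q\in\mathcal{Q}_{-D}/\PSL_2(\Z)}\bigl(E_4(\tau)^3-j(\tau_Q)\Delta(\tau)\bigr)=\Delta(\tau)^{h_S}H_{-D}(j(\tau)),
$$
which has integer coefficients because $H_{-D}(X)\in\Z[X]$, vanishes at every discriminant $-D$ CM point, and vanishes to order $h_S-h$ at the cusp $\infty$. By Theorem~\ref{Theorem1}, $\calL_D(\tau)$ is a weight $2$ meromorphic form on $\Gamma_0(6)$ whose only poles are simple and lie at discriminant $-D$ Heegner points; since each such point is a discriminant $-D$ CM point, it is a zero of $F_D$. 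Hence
$$
g_D(\tau):=\calL_D(\tau)\,F_D(\tau)
$$
is holomorphic on $\H$ and at the cusps, i.e.\ $g_D\in M_{2+12h_S}(\Gamma_0(6))$, with $2$-integral coefficients, and $g_D\equiv P(D;q)F_D(\tau)=\widehat{P}_S(D;q)\pmod 4$. Because $P(D;q)=O(q)$ and $F_D=O(q^{\,h_S-h})$, the reduction $g_D\bmod 4$ has vanishing constant term at $\infty$.

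For the dimension input, recall that $X_0(6)$ has genus $0$, no elliptic points, and $4$ cusps, so for even $k\ge 2$ the valence count gives $\dim_{\C}M_k(\Gamma_0(6))=k+1$, whence $\dim_{\C}M_{2+12h_S}(\Gamma_0(6))=12h_S+3$. The integral space $M_{2+12h_S}(\Gamma_0(6);\Z)$ is free of this rank, and the constant-term-at-$\infty$ functional is a surjection onto $\Z$ split by the integral form $E_4^{\,3h_S-1}E_6$ (weight $12h_S+2$, constant term $1$); hence its kernel $V$, the integral forms vanishing at $\infty$, is free of rank $12h_S+2$. By the previous step each $\overline{g_D}$ lies in $V\otimes\Z/4\Z\cong(\Z/4\Z)^{12h_S+2}$.

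Finally, the hypothesis $\#S>12h_S+2$ gives at least $12h_S+3=\operatorname{rank}(V\otimes\Z/4\Z)+1$ vectors $\overline{g_D}$ in this free $\Z/4\Z$-module. Reducing them modulo $2$ yields more than $\dim_{\F_2}(V\otimes\F_2)$ vectors, so there is a nontrivial relation $\sum_{D}\bar c_D\,\overline{g_D}\equiv 0\pmod 2$ with $\bar c_D\in\{0,1\}$; doubling it produces $\sum_{D}(2\bar c_D)\,g_D\equiv 0\pmod 4$ with coefficients in $\{0,2\}$ not all zero, a genuine $\Z/4\Z$-linear dependence. As $g_D\equiv\widehat{P}_S(D;q)\pmod 4$, this is the desired congruence $\sum_{D\in S}(2\bar c_D)\,\widehat{P}_S(D;q)\equiv 0\pmod 4$. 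The main obstacle is the first step: one must verify that the Hilbert class polynomial factor exactly annihilates the poles of $\calL_D$ and returns an \emph{integral} form of the single weight $2+12h_S$ that, crucially, \emph{vanishes at the cusp} $\infty$; it is this cuspidal vanishing that lowers the effective rank from $12h_S+3$ to $12h_S+2$ and makes the bound $\#S>12h_S+2$ sharp. The only secondary subtlety, that $\Z/4\Z$ is not a field, is dispatched by the mod $2$ reduction and doubling above.
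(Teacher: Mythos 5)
Your bridge from $g_D$ to $\widehat{P}_S(D;q)$ contains a false equality. You set $F_D=\Delta^{h_S}H_{-D}(j)$ and assert $P(D;q)F_D(\tau)=\widehat{P}_S(D;q)$; but the theorem's normalization is $\widehat{P}_S(D;q)=P(D;q)\cdot\Delta(\tau)^{h_S}\cdot H_{-D}(1/\Delta(\tau))$, and $H_{-D}(j)\neq H_{-D}(1/\Delta)$ because $j=E_4^3/\Delta$, not $1/\Delta$. The missing ingredient is exactly the congruence the paper makes explicit: $E_4\equiv 1\pmod 4$, hence $j\equiv 1/\Delta\pmod 4$ and so $H_{-D}(j(\tau))\equiv H_{-D}(1/\Delta(\tau))\pmod 4$ (both sides having integer coefficients). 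With that one-line patch your claim $g_D\equiv\widehat{P}_S(D;q)\pmod 4$ is restored and the rest of your argument goes through; without it, $g_D$ is congruent to the wrong $q$-series. (A smaller point: $g_D$ actually has integer, not merely $2$-integral, coefficients, since $\calL_D$ has integer coefficients by Lemma~\ref{lem:logdiff}; and its vanishing at $\infty$ follows directly from $\calL_D=O(q)$, no reduction modulo $4$ needed.)

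Once repaired, your endgame is genuinely different from the paper's. The paper finishes by citing Sturm's theorem: each $\widehat{P}_S(D;q)$ is congruent modulo $4$ to a weight $12h_S+2$ holomorphic form on $\Gamma_0(6)$, the Sturm bound is $[\SL_2(\Z):\Gamma_0(6)]\cdot(12h_S+2)/12=12h_S+2$, and a pigeonhole on truncated expansions yields the relation. You instead compute $\dim_{\C}M_{12h_S+2}(\Gamma_0(6))=12h_S+3$ from the geometry of $X_0(6)$ (genus $0$, four cusps, no elliptic points), pass to the integral lattice, split off the constant term using $E_4^{3h_S-1}E_6$, and pigeonhole inside the resulting free rank-$(12h_S+2)$ module over $\Z/4\Z$. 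The two finishes are cousins, since Sturm's theorem is itself proved by a valence/dimension count and the thresholds agree; but yours is more self-contained (in particular it avoids invoking Sturm for the non-prime modulus $4$, which strictly requires the standard divide-by-two iteration that the paper glosses over), at the cost of asserting the integral structure of $M_{12h_S+2}(\Gamma_0(6);\Z)$ — finite generation and full rank — which is standard but not contentless at the primes $2,3$ dividing the level; an echelonized integral basis argument suffices, and in fact only an upper bound on the rank of the lattice of integral forms vanishing at $\infty$ is needed, so the splitting is a luxury. Two features you make explicit that the paper leaves implicit are worth keeping: the cuspidal vanishing of the $g_D$, which is what makes $12h_S+2$ rather than $12h_S+3$ the right threshold, and the mod-$2$-then-double trick, which is needed because $\Z/4\Z$ is not a field and is equally (if tacitly) required in the paper's truncation argument.
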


\begin{remarks} \ \ \newline
\noindent
(1) Let $S_t=\{D_1, D_2, \dots, D_t\}$ be the set of the first $t$ square-free integers $1\leq D\equiv 23\pmod{24}.$
In earlier work, Griffin, Tsai and the author (see Lemma 2.2 of \cite{GOT}) proved for fundamental discriminants
$-D<0$ that
$$
h(-D)\leq\frac{\sqrt{D}(\log D+2)}{\pi}.
$$
Since a positive proportion of the integers $n\equiv 23\pmod{24}$ are square-free, this implies the existence of a linear dependence relation modulo 4 among the $$\{\widehat{P}_{S_t}(D_1;q), \widehat{P}_{S_t}(D_2;q), \dots, \widehat{P}_{S_t}(D_t;q)\}$$ for every sufficiently large $t$. In fact, there 
is a linear dependence for any subset with more than $12h_{S_t}+2$ many $q$-series.

\smallskip
\noindent
(2) If there are $t>12m+2$ distinct square-free integers $D_i\equiv 23\pmod{24}$ for which $h(-D_i)=m,$ then the proof of Theorem~\ref{Theorem2} shows that the $q$-series
$$\{ P(D_1;q) H_{-D_1}(1/\Delta(\tau)), P(D_2;q) H_{-D_2}(1/\Delta(\tau)),\dots, P(D_t;q) H_{-D_t}(1/\Delta(\tau))\}$$ are linearly dependent modulo 4. This follows from the fact that each $\widehat{P}_{S}(D_i;q)$ has the same factor $\Delta(\tau)^m$ that can be factored out.
\end{remarks}

This note is organized as follows. In Section~\ref{BorcherdsProducts} we recall some twisted Borcherds products that arise from Ramanujan's 3rd order mock theta functions. In Section~\ref{Theorem1Proof} we study the logarithmic derivatives of these products, and  we study their reductions modulo 4. Then we prove Theorem~\ref{Theorem1}. Finally, we prove Theorem~\ref{Theorem2} in Section~\ref{ProofTheorem2}.

\section*{Acknowledgements} \noindent The author thanks Andreas Mono and Badri Pandey for useful conversations related to this paper.

\section{Borcherds products for Ramanujan's 3rd order mock theta functions}\label{BorcherdsProducts}

In their work on derivatives of modular $L$-functions,
the author and Bruinier \cite{BruinierOno} produced {\it generalized twisted Borcherds Products} arising
from weight 1/2 harmonic Maass forms.
These constructions are extensions and generalizations of previous automorphic infinite products obtained by Borcherds
\cite{Borcherds1, Borcherds2} and later Zagier \cite{Zagiertraces}.
The general results (see Theorems~6.1 and 6.2
of \cite{BruinierOno})  give modular forms with twisted Heegner divisor whose
infinite product expansions arise from  weight 1/2 harmonic Maass forms (see  \cite{AMSBook, BruinierFunke} for background on harmonic Maass forms).

Ramanujan's mock theta functions,
which are special examples of holomorphic parts of harmonic Maass forms of weight 1/2 (for example,
see \cite{BringmannOnoMock, BringmannOno, Ono3, Zagier, Zwegers1, Zwegers2}), can be used to construct such products.
We recall one example which involves a vector-valued form assembled from the third order mock theta functions $f(q)$ and $\omega(q),$ which are defined by
\begin{equation}
f(q):=1+\sum_{n=1}^{\infty} \frac{q^{n^2}}{(1+q)^2(1+q^2)^2\cdots (1+q^n)^2},
\end{equation} and
\begin{equation}\label{omega}
\begin{split}
\omega(q):=\sum_{n=0}^{\infty} \frac{q^{2n^2+2n}}{(q;q^2)_{n+1}^2} =
\frac{1}{(1-q)^2}+\frac{q^4}{(1-q)^2(1-q^3)^2}+\frac{q^{12}}{
(1-q)^2(1-q^3)^2(1-q^5)^2}+\cdots.
\end{split}
\end{equation}
It is important to
note that $f(q)$ and $\omega(q)$ have integer coefficients.

Using these mock theta functions, we define the vector-valued function 
\begin{align*}
R(\tau)\coloneqq (R_0(\tau), R_1(\tau),\dots, R_{11}(\tau)),
\end{align*}
with components
\begin{equation}\label{weil}
R_j(\tau)\coloneqq\begin{cases} 0 \ \ \ \ \ &{\text {\rm if}}\ j=0, 3, 6, 9,\\
                    \chi_{-12}(j) q^{-1}f(q^{24}) \ \ \ \ &{\text {\rm if}}\ j=1, 5, 7, 11\\
                                     2q^{8}\left(-\omega(q^{12})+\omega(-q^{12})\right) \ \ \ \ &{\text {\rm if}}\ j=2,\\
                     -2q^{8}\left(\omega(q^{12})+\omega(-q^{12})\right) \ \ \ \ &{\text {\rm if}}\ j=4,\\
                     2q^{8}\left(\omega(q^{12})+\omega(-q^{12})\right) \ \ \ \ &{\text {\rm if}}\ j=8,\\
                     2q^{8}\left(\omega(q^{12})-\omega(-q^{12})\right) \ \ \ \ &{\text {\rm if}}\ j=10.
                     \end{cases}
\end{equation}
For convenience, we denote the coefficients of the components by
\begin{displaymath}
R_j(\tau)\eqqcolon\sum_{n\geq n_j} C_R(j;n)q^n
\end{displaymath}

For each square-free $1<D\equiv 23\pmod{24}$, we have the rational function
\begin{equation}\label{PD}
P_D(X):=\prod_{b\!\!\!\!\mod D} (1-e(-b/D)X)^{\leg{-D}{b}},
\end{equation}
where $e(\alpha):=e^{2\pi i \alpha}$ and $\leg{-D}{b}$ is the Kronecker character for the negative
fundamental discriminant $-D$. We define the {\it generalized Borcherds product}
$\Psi_D(\tau)$ by
\begin{equation}\label{PsiD}
\Psi_{D}(\tau):=\prod_{m=1}^{\infty} P_D(q^m)^{C_R(\overline{m} ; Dm^2)}.
\end{equation}
Here $\overline{n}$ denotes the canonical residue class of $n$ modulo 12.

\begin{theorem}\label{BorcherdsProduct}{\text {\rm [\S8.2 of \cite{BruinierOno}]}}
Assuming the notation and hypotheses above, the function  $\Psi_D(z)$ is a weight 0
meromorphic modular form on $\Gamma_0(6)$ with a discriminant -$D$ twisted Heegner divisor
(see \S5 of \cite{BruinierOno} for the explicit divisor).
\end{theorem}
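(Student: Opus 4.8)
The statement is quoted from \S8.2 of \cite{BruinierOno}, so what must be established is that the vector-valued function $R(\tau)$ of \eqref{weil} is an admissible input for the generalized twisted Borcherds product construction of \cite{BruinierOno}, whereupon Theorems~6.1 and 6.2 of that paper apply verbatim. The plan has two phases: first, exhibit $R(\tau)$ as the holomorphic part of a weight $1/2$ harmonic Maass form transforming under the relevant Weil representation; second, match the product \eqref{PsiD} and its exponents to the recipe of those theorems and read off the weight and divisor.

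First I would fix the arithmetic data. The group $\Gamma_0(6)$ is realized inside the orthogonal group of a rational quadratic space of signature $(1,2)$ attached to an even lattice $L$ whose discriminant group $L'/L$ is cyclic of order $12$, with quadratic form $r\mapsto -r^2/24 \pmod 1$. The $12$ components $R_0,\dots,R_{11}$ are then indexed by the cosets of $L'/L$, and the correct target is a harmonic Maass form of weight $1/2$ valued in the group algebra $\C[L'/L]$ transforming under the Weil representation $\rho_L$ (the dual representation for this signature, depending on convention).

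The central step is the transformation law, and this is the genuine technical point; everything else is bookkeeping. Here I would invoke Zwegers's completion of Ramanujan's third order mock theta functions: the non-holomorphic completions $\widehat f$ and $\widehat\omega$ of $f(q)$ and $\omega(q)$ are the holomorphic parts of a vector-valued harmonic Maass form of weight $1/2$, and the precise linear combinations and sign patterns recorded in \eqref{weil} --- the pairing of $f(q^{24})$ with the components $j\in\{1,5,7,11\}$ through $\chi_{-12}(j)$, together with the four combinations $\pm 2q^{8}(\omega(q^{12})\pm\omega(-q^{12}))$ in the even components --- are exactly those forcing $R(\tau)$ to transform under $\rho_L$. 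Matching Zwegers's explicit transformation data to the Weil representation conventions of \cite{BruinierOno} is the main obstacle. I would then verify the integrality hypothesis on the principal part: the only negative exponents occur in the components $j=1,5,7,11$, where $q^{-1}f(q^{24})$ contributes $C_R(1;-1)=\chi_{-12}(1)=1$ and its companions, all integers, while the $\omega$-components begin at $q^{8}$ and contribute nothing to the principal part.

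With $R(\tau)$ verified as an admissible weight $1/2$ input, the conclusion follows from Theorems~6.1 and 6.2 of \cite{BruinierOno}. The rational functions $P_D(X)$ and the exponents $C_R(\overline m; Dm^2)$ appearing in \eqref{PsiD} are precisely the product expansion produced by the twist by the fundamental discriminant $-D$, so $\Psi_D(\tau)$ is a meromorphic modular form on $\Gamma_0(6)$ whose weight is governed by the zero-coset constant term of the input. Since $R_0(\tau)\equiv 0$ (indeed all of $j=0,3,6,9$ vanish) and the twist is by a nontrivial fundamental discriminant, this constant term is $0$, giving weight $0$. Finally, the divisor formula of \S5 of \cite{BruinierOno} identifies the divisor of $\Psi_D$ with the discriminant $-D$ twisted Heegner divisor; because the principal part of the input is concentrated in the single coefficient arising from $q^{-1}$, only Heegner points of discriminant exactly $-D$ appear.
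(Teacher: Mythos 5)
The paper gives no proof of this statement at all: it is imported wholesale from \S8.2 of \cite{BruinierOno}, where exactly the route you describe is carried out --- Zwegers's completion of $f(q)$ and $\omega(q)$ furnishes the vector-valued weight $1/2$ harmonic Maass form transforming under the Weil representation for the discriminant group of order $12$, and Theorems~6.1 and 6.2 of that paper then produce the weight $0$ product $\Psi_D(\tau)$ with discriminant $-D$ twisted Heegner divisor. Your reconstruction is correct and matches the cited argument, including the key points that the principal part lives only in the components $j=1,5,7,11$ and that a nontrivial twist forces weight $0$.
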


\section{Proof of Theorem~\ref{Theorem1}}\label{Theorem1Proof}

We recall the holomorphic differential operator
\begin{align*}
\mathbb{D} \coloneqq q \frac{d}{dq} = \frac{1}{2\pi i}\cdot \frac{\mathrm{d}}{\mathrm{d}\tau}.
\end{align*}
Let $\Psi_{D}(\tau)$ be the generalized Borcherds product in (\ref{PsiD}). 
Then, we define the function
\begin{align*}
\calL_D(\tau) \coloneqq \frac{1}{-\sqrt{-D}} \frac{\left(\mathbb{D} \Psi_{D}\right)(\tau)}{\Psi_{D}(\tau)}.
\end{align*}
We prove the following properties about $\calL_D(\tau)$.
\begin{lemma} \label{lem:logdiff}
Assuming the notation and hypotheses of Theorem \ref{BorcherdsProduct}, the following are true.
\newline
\noindent (1) The function $\calL_D(\tau)$ is a weight 2 meromorphic modular form on $\Gamma_0(6)$ with $q$-expansion
\begin{align*}
\calL_D(\tau) = \sum_{m=1}^{\infty} C_R(\overline{m};Dm^2) m \sum_{\substack{n=1 \\ \gcd(n,D) = 1}}^{\infty} \left(\frac{-D}{n}\right) q^{mn}.
\end{align*}

\noindent (2) The poles of $\calL_D(\tau)$ in the fundamental domain of $\Gamma_0(6)$ are simple and are located at $\Gamma_0(6)$ Heegner points with discriminant $-D.$\end{lemma}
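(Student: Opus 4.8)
The plan is to regard $\calL_D$ as a normalized logarithmic derivative of the weight $0$ form $\Psi_D$ and to deduce all assertions from this structure together with the divisor computed in Theorem~\ref{BorcherdsProduct}. For the weight and modularity, I would differentiate the invariance $\Psi_D(\gamma\tau)=\Psi_D(\tau)$ for $\gamma=\kabcd\in\Gamma_0(6)$: since $\frac{d}{d\tau}(\gamma\tau)=(c\tau+d)^{-2}$, the chain rule gives
$$
\frac{\Psi_D'(\gamma\tau)}{\Psi_D(\gamma\tau)}=(c\tau+d)^2\,\frac{\Psi_D'(\tau)}{\Psi_D(\tau)},
$$
so $\mathbb{D}\Psi_D/\Psi_D$, and hence the constant multiple $\calL_D$, transforms with weight $2$ on $\Gamma_0(6)$. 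Meromorphy on $\H$ is immediate because $\Psi_D$ is a nonzero meromorphic function there.

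For the $q$-expansion in (1), I would expand the logarithmic derivative termwise from the product (\ref{PsiD}). Writing $\mathbb{D}\log\Psi_D=\sum_{m\ge1}C_R(\overline m;Dm^2)\,\mathbb{D}\log P_D(q^m)$ and expanding each factor of $P_D$ by the geometric series yields
$$
\mathbb{D}\log P_D(q^m)=-m\sum_{k\ge1}q^{mk}\sum_{b\bmod D}\leg{-D}{b}e(-bk/D).
$$
The inner sum over $b$ is, up to the factor $\chi_{-D}(-1)$ produced by the argument $-bk/D$, the Gauss sum of the odd primitive quadratic character $\chi_{-D}=\leg{-D}{\cdot}$; by the classical evaluation one has $\sum_{b\bmod D}\leg{-D}{b}e(-bk/D)=\chi_{-D}(-1)\,\chi_{-D}(k)\,\tau(\chi_{-D})$ with $\tau(\chi_{-D})=\sqrt{-D}$. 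The normalizing constant $1/(-\sqrt{-D})$ is chosen precisely to cancel the Gauss-sum factor $\tau(\chi_{-D})$ together with the signs from $\chi_{-D}(-1)=-1$ and from the differentiation, leaving the coefficient $\leg{-D}{n}$. After interchanging the two sums (justified by absolute convergence for $\im(\tau)$ large) and relabeling $k=n$, this produces
$$
\calL_D(\tau)=\sum_{m=1}^{\infty}C_R(\overline m;Dm^2)\,m\sum_{\substack{n=1\\\gcd(n,D)=1}}^{\infty}\leg{-D}{n}q^{mn},
$$
the coprimality condition appearing automatically since $\chi_{-D}(n)=0$ when $\gcd(n,D)>1$. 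I would then observe that this expansion is holomorphic at $i\infty$ and that $\Psi_D$ is finite and nonvanishing at the cusps of $\Gamma_0(6)$ (its twisted Heegner divisor is supported on interior points of $\H$), so $\calL_D$ has no poles at the cusps and is a genuine weight $2$ meromorphic modular form.

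For (2), I would read off the pole structure from $\dv(\Psi_D)$. If $\Psi_D$ has a zero or pole of order $n_0\in\Z$ at $\tau_0\in\H$, then locally $\Psi_D(\tau)=c\,(\tau-\tau_0)^{n_0}(1+O(\tau-\tau_0))$ with $c\neq0$, so $\frac{1}{2\pi i}\,\Psi_D'/\Psi_D$ has a \emph{simple} pole at $\tau_0$ with residue $n_0/(2\pi i)$, and is holomorphic wherever $\Psi_D$ is holomorphic and nonvanishing. By Theorem~\ref{BorcherdsProduct} the divisor of $\Psi_D$ is the discriminant $-D$ twisted Heegner divisor, whose support consists of $\Gamma_0(6)$ discriminant $-D$ Heegner points; hence every pole of $\calL_D$ in a fundamental domain is simple and lies at such a Heegner point.

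The main obstacle is the bookkeeping in the $q$-expansion: one must carry out the Gauss-sum evaluation with the correct branch of $\sqrt{-D}$ and track the sign $\chi_{-D}(-1)=-1$ alongside the sign from differentiation, so that the factor $1/(-\sqrt{-D})$ yields exactly the stated coefficient $\leg{-D}{n}$, and one must justify the interchange of the two infinite sums. The transformation law and the simplicity of the poles are then routine consequences of the logarithmic-derivative structure.
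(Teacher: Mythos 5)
Your proposal is correct and follows essentially the same route as the paper: differentiate the product \eqref{PsiD} termwise, evaluate the twisted exponential sums by separability and the classical Gauss sum evaluation (the paper cites (3.12) and Theorem 3.3 of Iwaniec--Kowalski, noting $-D\equiv 1\pmod 4$, exactly the step you perform by hand), and read off the simple poles at discriminant $-D$ Heegner points from the divisor of $\Psi_D$ given by Theorem~\ref{BorcherdsProduct}. Your only cosmetic deviation is proving the weight $2$ transformation by differentiating $\Psi_D(\gamma\tau)=\Psi_D(\tau)$ directly rather than invoking, as the paper does, that $\mathbb{D}$ agrees with the Ramanujan--Serre derivative in weight $0$; the content is identical.
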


\begin{proof}
Since $\mathbb{D}$ agrees with the Ramanujan-Serre derivative operator in weight $0$, the function $\left(\mathbb{D}\Psi_{D}\right)(g;\tau)$ has weight $2$. 
Together with Theorem \ref{BorcherdsProduct}, this proves the first assertion of part (1). To verify the claimed Fourier expansion, we calculate that
\begin{align*}
\frac{\left(\mathbb{D} P_{D}\right)(q^m)}{P_{D}(q^m)} &= q\frac{d}{dq}\sum_{b \text{ mod } D} \left(\frac{-D}{b}\right) \log\left(1-e^{-2\pi i \frac{b}{D}}q^m\right) = -m\sum_{b \text{ mod } D} \left(\frac{-D}{b}\right) \sum_{n=1}^{\infty} \left(e^{-2\pi i \frac{b}{D}}q^m\right)^n.
\end{align*}
By \cite[(3.12)]{iwakow}, we have
\begin{align*}
\left(\frac{-D}{n}\right) \sum_{b \text{ mod } D} \left(\frac{-D}{b}\right) e^{-\frac{2\pi i b}{D}} = \sum_{b \text{ mod } D} \left(\frac{-D}{b}\right) e^{-\frac{2\pi i bn}{D}} \qquad \text{if } \gcd(n,D) = 1,
\end{align*} 
and both sides of this identity vanish if $\gcd(n,D) > 1$. We use \cite[Theorem 3.3]{iwakow} to evaluate the left hand side of this identity assuming $\gcd(n,D) = 1$, and note that $-D \equiv 1 \pmod{4}$.  Therefore, we find that
\begin{align*}
\calL_D(\tau) &= \frac{1}{-\sqrt{-D}} \sum_{m=1}^{\infty} C_R(\overline{m};Dm^2) \cdot q\frac{d}{dq} \log\left(P_{D}(q^m)\right) \\
&= \sum_{m=1}^{\infty} C_R(\overline{m};Dm^2) m \sum_{\substack{n=1 \\ \gcd(n,D) = 1}}^{\infty} \left(\frac{-D}{n}\right) q^{mn},
\end{align*}
which proves the claimed Fourier expansion of $\calL_D(\tau)$. The second claim follows from Theorem~\ref{BorcherdsProduct} as every zero or pole of $\Psi_{D}(\tau)$ becomes a simple pole of its logarithmic derivative.
\end{proof}

Euler observed that
\begin{displaymath}
P(q)\coloneqq\sum_{n=0}^{\infty}p(n)q^n=\prod_{n=1}^{\infty}\frac{1}{1-q^n}=\frac{q^{1/24}}{\eta(\tau)}.
\end{displaymath}
We have the following alternate identity for $P(q)$, which in turn
provides a crucial congruence between
$P(q)$ and Ramanujan's third order mock theta function $f(q).$

\begin{lemma}\label{pofnhypergeometric}
As formal power series, we have that
$P(q)\equiv f(q)\pmod 4$.
\end{lemma}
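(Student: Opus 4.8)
The plan is to reduce the statement to a single classical partition identity together with one elementary observation modulo $4$. First I would invoke the Durfee square decomposition of partitions, which expresses $P(q)$ as a Lambert-type series matching the shape of $f(q)$. Writing $(q;q)_n \coloneqq \prod_{k=1}^{n}(1-q^k)$ and $(-q;q)_n \coloneqq \prod_{k=1}^{n}(1+q^k)$, the Durfee square identity reads
\[
P(q)=\prod_{n=1}^{\infty}\frac{1}{1-q^n}=\sum_{n=0}^{\infty}\frac{q^{n^2}}{(q;q)_n^2},
\]
where the $n=0$ summand is $1$; this is immediate from the fact that a partition whose Durfee square has side $n$ contributes the factor $q^{n^2}$ for the square and two independent factors $1/(q;q)_n$ for the partitions lying to the right of and below the square. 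Comparing this with the definition
\[
f(q)=\sum_{n=0}^{\infty}\frac{q^{n^2}}{(-q;q)_n^2},
\]
I see that the whole problem collapses to the term-by-term comparison of $1/(q;q)_n^2$ with $1/(-q;q)_n^2$ modulo $4$.

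The key elementary observation is that for each $k\ge 1$,
\[
(1+q^k)^2=(1-q^k)^2+4q^k(1-q^k)+4q^{2k}\equiv (1-q^k)^2 \pmod 4,
\]
so that, multiplying over $1\le k\le n$, one gets $(-q;q)_n^2\equiv (q;q)_n^2 \pmod 4$ as polynomials. Next I would pass to reciprocals: both $(q;q)_n^2$ and $(-q;q)_n^2$ are units in $\Z[[q]]$ (constant term $1$), and whenever $A\equiv B\pmod 4$ with $A,B$ units one has $A^{-1}-B^{-1}=A^{-1}(B-A)B^{-1}\in 4\Z[[q]]$. Hence $(q;q)_n^{-2}\equiv (-q;q)_n^{-2}\pmod 4$, and multiplying through by $q^{n^2}$ yields the term-by-term congruence
\[
\frac{q^{n^2}}{(-q;q)_n^2}\equiv \frac{q^{n^2}}{(q;q)_n^2}\pmod 4
\]
for every $n\ge 0$.

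Finally I would sum over $n$. Because the $n$-th summand of each series has $q$-order at least $n^2$, only finitely many summands influence any fixed Fourier coefficient, so both sides are well-defined elements of $\Z[[q]]$ and the term-by-term congruences assemble into $f(q)\equiv P(q)\pmod 4$, which is the assertion of Lemma~\ref{pofnhypergeometric}.

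As for difficulty, there is essentially no hard step: the crux is recognizing that the Durfee square identity converts the product $P(q)$ into a hypergeometric-type series whose summands coincide with those of $f(q)$ up to replacing each $(1-q^k)$ by $(1+q^k)$ in the denominators, and that this replacement is invisible modulo $4$ once the factors are squared. The only points deserving a word of justification are the stability of the congruence under inversion of unit power series and the formal ($q$-adic) convergence that legitimizes summing the term-by-term congruences, both of which are routine.
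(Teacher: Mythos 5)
Your proposal is correct and follows essentially the same route as the paper: the Durfee square decomposition $P(q)=\sum_{n\geq 0} q^{n^2}/(q;q)_n^2$ followed by the term-by-term congruence with $f(q)$ obtained by replacing $(1-q^k)$ with $(1+q^k)$ in the denominators. The only difference is that you spell out the inversion-of-units argument justifying $(q;q)_n^{-2}\equiv(-q;q)_n^{-2}\pmod 4$, a step the paper dismisses as trivial.
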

\begin{proof}
For every positive integer $m$, the $q$-series
\begin{displaymath}
\frac{1}{(1-q)(1-q^2)\cdots (1-q^m)}=\sum_{n=0}^{\infty}a_m(n)q^n
\end{displaymath}
is the generating function for $a_m(n)$, the number of partitions of
$n$ whose summands do not exceed $m$. Therefore, 
the $q$-series
\begin{displaymath}
\frac{q^{m^2}}{(1-q)^2(1-q^2)^2\cdots (1-q^m)^2}=\sum_{n=0}^{\infty}
b_m(n)q^n
\end{displaymath}
is the generating function for $b_m(n)$, the number of partitions of
$n$ with a Durfee square of size $m^2$. The
identity 
$$
P(q)=1+\sum_{m=1}^{\infty}\frac{q^{m^2}}{(1-q)^2(1-q^2)^2\cdots (1-q^m)^2}
$$
follows by summing in $m$. 
The claimed congruence follows trivially from the definition of $f(q)$ as
$$
\frac{q^{m^2}}{(1-q)^2(1-q^2)^2\cdots (1-q^m)^2}\equiv \frac{q^{m^2}}{(1+q)^2(1+q^2)^2\cdots (1+q^m)^2} \pmod 4.
$$
\end{proof}

\begin{proof}[Proof of Theorem~\ref{Theorem1}]
We  note that $2 (\omega (\pm q^{12})+ \omega(\mp q^{12})\equiv 0\pmod 4$, which then implies that
\begin{displaymath}
R_j(\tau)=
\sum_{n\geq n_j} C_R(j;n)q^n\equiv \begin{cases} \chi_{-12}(j)q^{-1}f(q^{24})\equiv \chi_{-12}(j)q^{-1} P(q^{24}) \pmod 4 \ \ \  &{\text {\rm if}}\ j=1, 5, 7, 11,\\
0\pmod 4 \ \ \ \ \ &{\text {\rm otherwise.}}
\end{cases}
\end{displaymath}
The theorem follows as the definition of $P(D;q)$ is congruent to $\calL_D(\tau)$ modulo 4 thanks to Lemmas~\ref{lem:logdiff} and ~\ref{pofnhypergeometric},  and the fact that
$\chi_{-12}(m)m\equiv \chi_{12}(m) \pmod 4.$ 
 \end{proof}

\section{Proof of Theorem~\ref{Theorem2} }\label{ProofTheorem2}
Here we prove Theorem~\ref{Theorem2}, which follows easily from a well-known theorem of Sturm \cite{Sturm} on congruences between holomorphic modular forms.

\begin{proof}[Proof of Theorem~\ref{Theorem2}]
For each square-free $1\leq D\equiv 23\pmod{24},$ we have that
$\calL_D(\tau)$ is a weight 2 meromorphic modular form on $\Gamma_0(6)$ with simple poles at discriminant $-D$ Heegner points. Therefore, in terms of the modular function $j(\tau)=q^{-1}+744+196884q+\dots$ and the weight 12 cusp form $\Delta(\tau)=q-24q^2+\dots,$ we find that
$$
\calL_D(\tau)\cdot \Delta(\tau)^{h(-D)}\cdot H_{-D}(j(\tau))
$$
is a weight $12h(-D)+2$ holomorphic modular form on $\Gamma_0(6).$ Moreover, since $j(\tau)=E_4(\tau)^3/\Delta(\tau)\equiv 1/\Delta(\tau)\pmod 4,$ we have that 
$$
\calL_D(\tau)\cdot \Delta(\tau)^{h(-D)}\cdot H_{-D}(j(\tau))\equiv \calL_D(\tau)\cdot \Delta(\tau)^{h(-D)}\cdot H_{-D}(1/\Delta(\tau))\pmod 4.
$$

Given the set $S$, we then find that each $\widehat{P}_S(D;q)$ is congruent to a weight $12 h_S+2$ holomorphic modular form on $\Gamma_0(6).$  However, by a theorem of Sturm, two weight $k$  holomorphic modular forms are congruent if their Fourier expansions are congruent for more than the first $$[\SL_2(\Z) : \Gamma_0(6)]\cdot (12 h_S+2)/12= 12h_S+2$$ many terms. 
\end{proof}

\end{document}